\def \N {{\mathbb N}}
\def \R {{\mathbb R}}
\def\le{\leqslant}
\def\ge{\geqslant}
\theoremstyle{plain}
\newtheorem{theorem}{Theorem}
\newtheorem{proposition}{Proposition}[section]
\newtheorem{lemma}[proposition]{Lemma}
\theoremstyle{remark}
\numberwithin{equation}{section}
\begin{document}


\title[A variant of the prime number theorem]
{A variant of the prime number theorem}
\author[\tiny Kui Liu, Jie Wu \& Zhishan Yang]{Kui Liu, Jie Wu \& Zhishan Yang}

\address{%
Kui Liu
\\
School of Mathematics and Statistics
\\
Qingdao University
\\
308 Ningxia Road
\\
Qingdao
\\
Shandong 266071
\\
China}
\email{liukui@qdu.edu.cn}

\address{%
Jie Wu
\\
CNRS UMR 8050\\
Laboratoire d'Analyse et de Math\'ematiques Appliqu\'ees\\
Universit\'e Paris-Est Cr\'eteil\\
94010 Cr\'eteil cedex\\
France}
\email{jie.wu@u-pec.fr}

\address{%
Zhishan Yang
\\
School of Mathematics and Statistics
\\
Qingdao University
\\
308 Ningxia Road
\\
Qingdao
\\
Shandong 266071
\\
China}
\email{zsyang@qdu.edu.cn}

\date{\today}

\begin{abstract}
Let $\Lambda(n)$ be the von Mangoldt function, and let $[t]$ be the integral part of real number $t$.
In this note, we prove that for any $\varepsilon>0$ the asymptotic formula
$$
\sum_{n\le x} \Lambda\Big(\Big[\frac{x}{n}\Big]\Big)
= x\sum_{d\ge 1} \frac{\Lambda(d)}{d(d+1)} + O_{\varepsilon}\big(x^{9/19+\varepsilon}\big)
\qquad
(x\to\infty)$$
holds.
This improves a recent result of Bordell\`es, which requires $\frac{97}{203}$ 
in place of $\frac{9}{19}$.
\end{abstract}

\subjclass[2010]{11N37, 11L07}
\keywords{The prime number theorem, Exponential sums, Vaughan's identity}

\maketitle

\section{Introduction}

The prime number theorem is a basic result in number theory and has many applications.
Denoting by $\Lambda(n)$ the von Mangoldt function, 
the prime number theorem states, in strong form, as follows: 
there is a constant $c>0$ such that for $x\to\infty$, we have
$$
\sum_{n\le x} \Lambda(n) = x + O(x\exp\{-c(\log x)^{3/5}(\log\log x)^{-1/5}\})
$$
and
$$
\sum_{n\le x} \Lambda(n) = x + O_{\varepsilon}(x^{1/2+\varepsilon})
\;\;\Leftrightarrow\;\;
\text{Riemann Hypothesis},
$$ 
where $\varepsilon$ is an arbitrarily small positive constant.
Clearly it is also interesting to study the distribution of prime numbers in different sequences of integers such as the arithmetic progressions, 
the Beatty sequence $([\alpha n + \beta])_{n\ge 1}$, 
the Piatetski-Shapiro sequence $([n^c])_{n\ge 1}$, etc,
where $[t]$ denotes the integral part of the real number.
For example, Banks and Shparlinski \cite[Corollary 5.6]{BanksShparlinski2009} proved the following result:
Let $\alpha$ and $\beta$ be fixed real numbers with $\alpha>0$, irrational and of finite type,
then there is a positive constant $c=c(\alpha, \beta)$ such that
$$
\sum_{n\le x} \Lambda([\alpha n + \beta])
= x + O(x\exp\{-c(\log x)^{3/5}(\log\log x)^{-1/5}\})
$$
as $x\to\infty$.
About works related to the Piatetski-Shapiro prime number theorem,
we infer the reader to see \cite{PiatetskiShapiro1953, HeathBrown1983, RivatWu2001}.
On the other hand, Bordell\`es-Dai-Heyman-Pan-Shparlinski \cite{BDHPS2019} 
established an asymptotic formula of 
\begin{equation}\label{Sf}
S_f(x) := \sum_{n\le x} f\Big(\Big[\frac{x}{n}\Big]\Big),
\end{equation}
under some simple assumptions of $f$.
Subsequently, Wu \cite{Wu2020} and Zhai \cite{Zhai2020} improveed their results independently.
In particular, applying \cite[Theorem 1.2(i)]{Wu2020} or \cite [Theorem 1]{Zhai2020} 
to the von Mangoldt function $\Lambda(n)$, we have
$$
S_{\Lambda}(x) 
= x\sum_{d\ge 1}\frac{\Lambda(d)}{d(d+1)} + O_{\varepsilon}\big(x^{1/2+\varepsilon}\big)
$$
for $x\ge 1$.
With the help of the Vaughan identity and the method of one-dimensional exponential sum,
Ma and Wu \cite{MaWu2020} breaked the $\frac{1}{2}$-barrier:
$$
S_{\Lambda}(x)
= x\sum_{d\ge 1}\frac{\Lambda(d)}{d(d+1)} + O_{\varepsilon}\big(x^{35/71+\varepsilon}\big)
\qquad
(x\ge 1).
$$
Very recently Bordell\`es \cite[Corollary 1.3]{Bordelles2020} sharpened the exponent $\frac{35}{71}$ 
to $\frac{97}{203}$ by using a result of Baker on 2-dimensional exponential sums \cite[Theorem 6]{Baker2007}.

The aim of this short note is to propose a better exponent 
by establishing an estimate on 3-dimensional exponential sums (see Proposition \ref{prop:HeatBrown} below).

\begin{theorem}\label{thm}
For any $\varepsilon>0$, we have
\begin{equation}\label{eq:thm}
S_{\Lambda}(x)
= x\sum_{d\ge 1}\frac{\Lambda(d)}{d(d+1)} + O_{\varepsilon}\big(x^{9/19+\varepsilon}\big)
\quad
\text{as}\;\;
x\to \infty.
\end{equation}
\end{theorem}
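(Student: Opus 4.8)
The plan is to isolate the main term by an exact combinatorial reduction and then to bound the resulting error by a one-dimensional sum twisted against the sawtooth function $\psi(t):=t-[t]-\tfrac12$. First I would use the exact splitting
$$
S_{\Lambda}(x)=\sum_{N<n\le x}\Lambda\Big(\Big[\frac xn\Big]\Big)+\sum_{n\le N}\Lambda\Big(\Big[\frac xn\Big]\Big),
$$
where $N=x^{\theta}$ is a parameter to be chosen. In the first sum I would group the $n$ according to the value $d=[x/n]$, so that $n$ runs over $(x/(d+1),x/d]$ and the inner count equals $[x/d]-[x/(d+1)]$; inserting $[t]=t-\tfrac12-\psi(t)$ produces the main term $x\sum_{d\le x/N}\Lambda(d)/(d(d+1))$, whose tail beyond $x/N$ is $O(N\log x)$, together with boundary terms of size $O(\log x)$ and the error
$$
\sum_{d\le x/N}\Lambda(d)\Big(\psi\Big(\frac{x}{d+1}\Big)-\psi\Big(\frac xd\Big)\Big).
$$
The second sum has at most $N$ terms, each $\ll\log x$, hence is $O(N\log x)$ and is placed into the error. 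In this way the whole problem reduces to proving
$$
\sum_{d\le D}\Lambda(d)\,\psi\Big(\frac xd\Big)\ll_{\varepsilon} x^{9/19+\varepsilon}
\qquad(D:=x^{1-\theta}),
$$
together with the analogous bound for the shift $d\mapsto d+1$, which is handled identically. Note that the range $d>x/N$, where $\psi(x/d)$ varies too slowly to exhibit cancellation, is precisely the part absorbed by the trivial estimate of the short sum $\sum_{n\le N}\Lambda([x/n])$.

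Next I would expand the sawtooth function by the truncated Fourier (Vaaler) approximation, replacing $\psi(x/d)$ by $\sum_{1\le h\le H}c_h\,e(\pm hx/d)$ (with $e(t):=\me^{2\pi\mi t}$) up to a well-controlled remainder, so that the error becomes a weighted sum of the exponential sums $\sum_{d\le D}\Lambda(d)\,e(hx/d)$ over $1\le h\le H$, with $H$ chosen so that the truncation contributes $O(D/H)=O(x^{9/19})$. To detect cancellation in $\sum_{d}\Lambda(d)e(hx/d)$ I would apply Vaughan's identity, decomposing $\Lambda$ into Type~I pieces $\sum_{m}a_m\sum_{n}e(hx/(mn))$ (with $a_m$ smooth and one variable running over a long interval) and Type~II bilinear pieces $\sum_{m}\sum_{n}a_m b_n\,e(hx/(mn))$. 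Carrying the summation over $h$ along, these become genuinely multidimensional exponential sums; in the decisive ranges they are three-dimensional sums of the shape $\sum_{h}\sum_{m}\sum_{n}e(hx/(mn))$.

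The Type~I sums and the ranges in which one variable is long can be treated by classical one- and two-dimensional van der Corput and Baker-type estimates, as in the earlier works of Ma--Wu \cite{MaWu2020} and Bordell\`es \cite{Bordelles2020}. The new input, and the heart of the argument, is to bound the critical Type~II sums by the three-dimensional exponential sum estimate of Proposition~\ref{prop:HeatBrown}, obtained through Heath-Brown's method. I expect the main obstacle to lie exactly here: one must verify that the size conditions on $(h,m,n)$ demanded by Proposition~\ref{prop:HeatBrown} hold throughout the relevant dyadic boxes, and then balance the Vaughan parameters $U,V$, the Fourier length $H$, and the splitting exponent $\theta$ so that every Type~I and Type~II contribution, as well as the trivial terms $O(N\log x)$ and $O(D/H)$, is $\ll_{\varepsilon} x^{9/19+\varepsilon}$. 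Solving the resulting system of constraints forces the choice $\theta=9/19$ (so that $D=x^{10/19}$ and $H=x^{1/19}$), which is where the gain over Bordell\`es's exponent $\tfrac{97}{203}$ originates; it is precisely the availability of the three-dimensional estimate, in place of a two-dimensional one, that makes this choice admissible.
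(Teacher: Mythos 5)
Your proposal is correct and takes essentially the same route as the paper: the same splitting at $N=x^{9/19}$ with the main term extracted via $\psi$, Vaaler's approximation combined with Vaughan's identity (you merely apply them in the opposite order, which is immaterial since both lead to the same three-dimensional sums $\sum_h\sum_m\sum_n a_{h,m}b_n\,{\rm e}\big(hx/(mn+\delta)\big)$ treated by Proposition \ref{prop:HeatBrown}, whose shift parameter $\delta$ covers your $d\mapsto d+1$ case), and the type I ranges handled by exponent-pair estimates as in Ma--Wu. The only cosmetic difference is that the paper optimizes the Vaaler length $H$ separately inside each dyadic block $d\sim D_j$ subject to $H\le D_j^{1/2}$ rather than fixing $H=x^{1/19}$ globally, but both choices respect the constraint $H\le N$ of Proposition \ref{prop:HeatBrown} and yield the same exponent $9/19$.
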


For comparaison, we have $\frac{97}{203}\approx 04778$ and $\frac{9}{19}\approx 0.4736$.
In fact, Bordell\`es established a more general result (see \cite[Theorem 1.1]{Bordelles2020}):
\begin{equation}\label{Bordelles}
S_{\Lambda}(x)
= x\sum_{d\ge 1}\frac{\Lambda(d)}{d(d+1)} 
+ O_{\varepsilon}\big(x^{\frac{14(\kappa+1)}{29\kappa-\lambda+30}+\varepsilon}\big)
\quad
(x\to \infty),
\end{equation}
where $(\kappa, \lambda)$ is an exponent pair satisfying $\kappa\le \frac{1}{6}$ and $\lambda^2+\lambda+3-\kappa (5+9\kappa-\lambda)>0$. 
The exponent $\frac{97}{203}$ comes from the choice of 
$(\kappa, \lambda) = (\frac{13}{84}+\varepsilon, \frac{55}{84}+\varepsilon)$
--- Bourgain's new exponent pair \cite[Theorem 6]{Bourgain2017}.
We note that under the exponent pair hypothesis 
(i.e. $(\varepsilon, \frac{1}{2}+\varepsilon)$ is an exponent pair, see \cite{GrahamKolesnik1991}),
Bordell\`es' \eqref{Bordelles} only gives the exponent $\frac{28}{59}\approx 0.4745$, 
which is larger than our constant $\frac{9}{19}\approx 0.4736$.

Some related works on the quantity \eqref{Sf} can be found in \cite{MaSun2020, Wu2019}.

\vskip 8mm

\section{Preliminary lemmas}

In this section, we shall cite three lemmas, which will be needed in the next section.
The first one is \cite[Lemma 1]{FouvryIwaniec1989}.

\begin{lemma}\label{lem:space}
Let $\alpha\in \R^*$ and $\beta\in \R^*$.
For $M\ge 1$, $N\ge 1$ and $\Delta>0$, define
$$
\mathcal{D}(M, N; \Delta)
:= \Big|\Big\{(m, \tilde{m}, n, \tilde{n})\in \N^4 : m, \tilde{m}\sim M; \; n, \tilde{n}\sim N; \;
\Big|\Big(\frac{m}{\tilde{m}}\Big)^{\alpha} - \Big(\frac{n}{\tilde{n}}\Big)^{\beta}\Big|\le \Delta\Big\}\Big|,
$$
where $m\sim M$ means that $M<m\le 2M$.
Then we have 
$$
\mathcal{D}(M, N)\ll_{\alpha, \beta} MN \log(2MN) + \Delta (MN)^2
$$
uniformly for $M\ge 1$, $N\ge 1$ and $\Delta>0$,
where the implied constant depends on $\alpha$ and $\beta$.
\end{lemma}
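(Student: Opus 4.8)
The plan is to reduce the power condition to closeness of the two ratios and then to count via an integer bilinear form, isolating a diagonal that produces the logarithm. First I would dispose of $\Delta\ge 1$, since there the asserted bound already exceeds the trivial count $\mathcal{D}\ll(MN)^2$; so assume $\Delta<1$. Since $\alpha,\beta\in\R^*$, the maps $t\mapsto t^\alpha$ and $t\mapsto t^\beta$ are bi-Lipschitz on the interval $[\tfrac12,2]$ that contains every ratio $m/\tilde m$ and $n/\tilde n$; hence $|(m/\tilde m)^\alpha-(n/\tilde n)^\beta|\le\Delta$ forces $|m/\tilde m-(n/\tilde n)^{\beta/\alpha}|\le C\Delta$ for some $C=C(\alpha,\beta)$. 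In the model case $\alpha=\beta$ this reads $|m/\tilde m-n/\tilde n|\le C\Delta$, which after clearing denominators becomes $|m\tilde n-n\tilde m|\le C\Delta\,\tilde m\tilde n\le H$ with $H:=4C\Delta MN$. Writing $L:=m\tilde n-n\tilde m\in\Z$, I am left with $\mathcal{D}\ll\sum_{|L|\le H}R(L)$, where $R(L)$ counts the quadruples with $m\tilde n-n\tilde m=L$.

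The logarithm will come entirely from the diagonal $L=0$. There $m/\tilde m=n/\tilde n=:a/b$ in lowest terms, so $(m,\tilde m,n,\tilde n)=(aj,bj,ak,bk)$ with $j,k\ge1$ and $\max(a,b)\le 2M$; the constraints $aj,bj\sim M$ and $ak,bk\sim N$ leave $\ll M/\max(a,b)$ choices for $j$ and $\ll N/\max(a,b)$ for $k$. Summing, and using that there are $\ll s$ coprime pairs with $\max(a,b)=s$ and $a\asymp b$, gives $R(0)\ll MN\sum_{s\le 2M}s^{-1}\ll MN\log(2MN)$, which is exactly the harmonic sum responsible for the first term.

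For $L\ne0$ I would fix the denominators $\tilde m,\tilde n$ and solve the linear equation $\tilde n\,m-\tilde m\,n=L$ in $m\sim M$, $n\sim N$. With $g:=\gcd(\tilde m,\tilde n)$ this is solvable only when $g\mid L$, and the solutions form a single residue class, so there are $\ll 1+g$ of them in the box. Hence $\sum_{0<|L|\le H}R(L)\ll\sum_{\tilde m\sim M,\tilde n\sim N}(1+g)(1+H/g)$; expanding the product and invoking the two standard estimates $\sum_{\tilde m,\tilde n}\gcd(\tilde m,\tilde n)\ll MN\log(2MN)$ and $\sum_{\tilde m,\tilde n}\gcd(\tilde m,\tilde n)^{-1}\ll MN$ yields $\ll MN\log(2MN)+\Delta(MN)^2$. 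Combining the two ranges of $L$ gives the asserted bound.

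The hard part will be the first term, not the second. The naive count, allowing one $m$ for each $\tilde m$, only delivers $MN\min(M,N)+\Delta(MN)^2$, and removing the spurious factor $\min(M,N)$ is precisely what the divisor/gcd bookkeeping above accomplishes. The second delicate point is the passage to general exponents $\alpha\ne\beta$, where $(n/\tilde n)^{\beta/\alpha}$ is no longer a ratio of integers and the bilinear form $m\tilde n-n\tilde m$ is unavailable; there I would replace the exact diagonal by the separation estimate that two distinct values $(m/\tilde m)^\alpha$ differ by $\gg M^{-2}$ (and symmetrically on the $n$-side), which lets the same dyadic decomposition over the heights of the reduced fractions reproduce both the harmonic sum and the area term $\Delta(MN)^2$.
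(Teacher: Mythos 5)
This lemma is not proved in the paper at all --- it is imported verbatim from \cite{FouvryIwaniec1989} (their Lemma 1) --- so your attempt can only be measured against the standard argument, which it essentially reconstructs. Your model case $\alpha=\beta$ is complete and correct: the reduction to $\Delta<1$, the bi-Lipschitz step on $[\frac12,2]$, the diagonal count $R(0)\ll MN\log(2MN)$ via reduced fractions $a/b$ with multiplicities $\ll M/\max(a,b)$ and $\ll N/\max(a,b)$, and the off-diagonal count via the linear equation $\tilde n m-\tilde m n=L$ with the $\gcd$ bookkeeping ($\ll 1+g$ solutions per admissible $L$, $\ll 1+H/g$ admissible $L$, and $\sum_{\tilde m\sim M,\,\tilde n\sim N}\gcd(\tilde m,\tilde n)\ll MN\log(2MN)$) all check out and give exactly $MN\log(2MN)+\Delta(MN)^2$. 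You also correctly diagnose that the whole point of the lemma is beating the naive $MN\min(M,N)$.

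The one soft spot is the general case $\alpha\neq\beta$, which is where the lemma actually lives and which you compress into a single sentence with the separation constant stated as $\gg M^{-2}$. That is the global worst-case spacing, and used literally it does not close: within the dyadic class of fractions of height $\asymp Q$, a $\Delta$-window would then contain $\ll 1+\Delta M^2$ fractions, each of multiplicity $\ll M/Q$, and already the class $Q\asymp 1$ paired against the full $n$-side mass $N^2$ contributes $\ll MN^2+\Delta M^3N^2$, losing both terms of the target bound. What makes the argument work is the height-localized spacing: two distinct reduced fractions of heights $\asymp Q$ differ by at least $(bb')^{-1}\gg Q^{-2}$, so the window count is $\ll 1+\Delta Q^2$, exactly balancing the multiplicity $M/Q$. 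Then in the dyadic class $(Q,R)$ the number of close pairs of fractions is $\ll\min(Q,R)^2+\Delta Q^2R^2$, and with multiplicities $(M/Q)(N/R)$ one gets
\[
\mathcal{D}(M,N;\Delta)\ll\sum_{Q,R\ \mathrm{dyadic}}\Big(MN\,\frac{\min(Q,R)}{\max(Q,R)}+\Delta MNQR\Big)\ll MN\log(2MN)+\Delta(MN)^2,
\]
the harmonic sum arising from the $\ll\log$ dyadic pairs at each fixed ratio $\max/\min$. Since you explicitly invoke the dyadic decomposition over heights, you clearly have this mechanism in mind; with the spacing stated at scale $Q^{-2}$ rather than $M^{-2}$ your sketch becomes a correct proof, and indeed one that subsumes the $\alpha=\beta$ case, making the bilinear-form detour a pleasant but unnecessary warm-up.
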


The second one is the Vaughan identity \cite[formula (3)]{Vaughan1980}.

\begin{lemma}\label{lem:VaughanIdentity}
There are six real arithmetical functions $\alpha_k(n)$ verifying
$|\alpha_k(n)|\ll_{\varepsilon} n^{\varepsilon}$
for
$(n\ge 1, \, 1\le k\le 6)
$
such that, for all $D\ge 100$ and any arithmetical function $g$, we have
\begin{equation}\label{identity:Vaughan}
\sum_{D<d\le 2D} \Lambda(d) g(d)
= S_1 + S_2 + S_3 + S_4,
\end{equation}
where 
\begin{align*}
S_1
& := \sum_{m\le D^{1/3}} \alpha_1(m) \sum_{D<mn\le 2D} g(mn),
\\\noalign{\vskip 1mm}
S_2
& := \sum_{m\le D^{1/3}} \alpha_2(m) \sum_{D<mn\le 2D} g(mn)\log n,
\\\noalign{\vskip 1mm}
S_3
& := \mathop{{\sum}\,\,{\sum}}_{\substack{D^{1/3}<m, n\le D^{2/3}\\ D<mn\le 2D}} \alpha_3(m) \alpha_4(n) g(mn),
\\\noalign{\vskip 1mm}
S_4
& := \mathop{{\sum}\,\,{\sum}}_{\substack{D^{1/3}<m, n\le D^{2/3}\\ D<mn\le 2D}} \alpha_5(m) \alpha_6(n) g(mn).
\end{align*}
The sums $S_1$ and $S_2$
are called as type I, $S_3$ and $S_4$ are called as type II.
\end{lemma}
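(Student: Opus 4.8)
The plan is to derive Vaughan's identity from a Dirichlet--series operator identity, specialise the truncation parameters to $u=v=D^{1/3}$, and then sort the resulting terms into $S_1,\dots,S_4$. Writing $z:=D^{1/3}$, $F(s):=\sum_{m\le z}\Lambda(m)m^{-s}$ and $M(s):=\sum_{d\le z}\mu(d)d^{-s}$, I would begin from the formal identity
\[
-\frac{\zeta'}{\zeta}=F-\zeta'M-\zeta FM+\Big(-\frac{\zeta'}{\zeta}-F\Big)(1-\zeta M),
\]
which is checked by a one-line expansion of the right-hand side. Comparing Dirichlet coefficients, and observing that every $n$ with $D<n\le 2D$ satisfies $n>D\ge z$ so that the coefficient of $F$ (supported on $n\le z$) drops out, I obtain for such $n$ the arithmetic identity
\[
\Lambda(n)=\sum_{\substack{d\mid n\\ d\le z}}\mu(d)\log\frac{n}{d}
-\sum_{\substack{de\mid n\\ d\le z,\,e\le z}}\mu(d)\Lambda(e)
-\sum_{\substack{mk=n\\ m>z,\,k>z}}\Lambda(m)\sum_{\substack{d\mid k\\ d\le z}}\mu(d),
\]
where the last inner coefficient is exactly the nonvanishing part of $1-\zeta M$, which is supported on $k>z$.

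Next I would insert this into $\sum_{D<n\le 2D}\Lambda(n)g(n)$, the left-hand side of \eqref{identity:Vaughan}, and read off the three contributions. In the first sum, setting $n=dn'$ and $\log(n/d)=\log n'$ gives $\sum_{d\le z}\mu(d)\sum_{D<dn'\le 2D}g(dn')\log n'$, which is precisely $S_2$ with $\alpha_2:=\mu$. In the second sum I would group $a:=de$, so that its contribution is $-\sum_a c(a)\sum_{D<ak\le 2D}g(ak)$ with $c(a):=\sum_{de=a,\,d,e\le z}\mu(d)\Lambda(e)$ and $a\le z^2=D^{2/3}$; splitting the range of $a$ at $z$ routes the part $a\le z$ into the type I sum $S_1$ (with $\alpha_1:=-c$) and the part $z<a\le D^{2/3}$ into the type II sum $S_3$ (with $\alpha_3:=-c$ and $\alpha_4:=\mathbf 1$). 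The third sum is already bilinear with both variables larger than $z$, and yields $S_4$ with $\alpha_5:=\Lambda$ and $\alpha_6(k):=-\sum_{d\mid k,\,d\le z}\mu(d)$. This produces the six functions and the four sums required by the statement.

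Finally I would verify the two requirements. The bound $|\alpha_k(n)|\ll_\varepsilon n^\varepsilon$ is elementary in each case: $|\mu|\le 1$, $\Lambda(n)\le\log n$, $|c(n)|\le\sum_{e\mid n}\Lambda(e)=\log n$, and $|\alpha_6(n)|\le\tau(n)\ll_\varepsilon n^\varepsilon$, all of which are $\ll_\varepsilon n^\varepsilon$. The step that needs the most care — and which I expect to be the only real obstacle — is matching the two type II sums to the window $(D^{1/3},D^{2/3}]$. For the bilinear contributions one has $mk=n\in(D,2D]$ with $m,k>z$, so $m=n/k<2D/z=2D^{2/3}$ and likewise $k<2D^{2/3}$; thus both factors lie in $(D^{1/3},2D^{2/3})$ automatically, and reconciling this with the stated upper endpoint $D^{2/3}$ — reading it up to the implied constant, or equivalently performing a dyadic decomposition $m\sim M$, $k\sim N$ with $MN\asymp D$, which forces $M,N\ll D^{2/3}$ — is the bookkeeping that completes the proof. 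The algebraic identity and the coefficient estimates themselves are routine.
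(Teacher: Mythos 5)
The paper offers no proof of this lemma at all: it is quoted verbatim (via Ma--Wu) from Vaughan, with a bare citation to \cite[formula (3)]{Vaughan1980}, and that formula is precisely the four-term $\zeta$-identity you start from. So in substance you have reconstructed the intended argument: your expansion of $-\zeta'/\zeta=F-\zeta'M-\zeta FM+(-\zeta'/\zeta-F)(1-\zeta M)$ is correct, the observation that the $F$-term drops for $n>D\ge D^{1/3}$ is right, the sorting of the three remaining convolutions into $S_1,\dots,S_4$ is the standard bookkeeping, and your coefficient bounds ($|\mu|\le 1$, $|c(a)|\le\log a$, $\Lambda(m)\le\log m$, $|\alpha_6(k)|\le\tau(k)$) are all fine. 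In particular you correctly note that for $a\le z$ the constraints $d,e\le z$ in $c(a)$ are vacuous, so $\alpha_1=-(\mu*\Lambda)$ on its stated range.

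Two caveats keep this from being a literal proof of the lemma as stated, and you only half-address the first. (1) You candidly compute that the bilinear variables land in $(D^{1/3},2D^{2/3})$ rather than $(D^{1/3},D^{2/3}]$, but neither of your proposed repairs closes this: reading the endpoint ``up to the implied constant'' changes the statement rather than proves it, and a dyadic decomposition refines the support but never shrinks it. Nor can the mismatch be removed by tuning the truncation: pushing $z$ up to $2D^{1/3}$ caps the type II variables at $D^{2/3}$ but then violates the sharp type I cap $m\le D^{1/3}$, so no single choice of $z$ makes all four ranges sharp simultaneously. The honest conclusion is that the lemma's endpoints must be read up to bounded factors (e.g.\ $m,n\ll D^{2/3}$), which is exactly how it is used in Section 4, where only dyadic sizes $M,N$ with $MN\asymp D$ and $D^{1/3}\ll M,N\ll D^{2/3}$ enter the estimates; none of the paper's bounds is sensitive to such constants. (2) Your $\alpha_3=-c$ and $\alpha_6(k)=-\sum_{d\mid k,\,d\le z}\mu(d)$ genuinely depend on $D$ through the truncation $z=D^{1/3}$ (e.g.\ $c(p)=0$ for a prime $p\in(z,z^2]$, while $(\mu*\Lambda)(p)=\log p$), whereas the lemma quantifies the six functions before $D$. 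This is again harmless, because your bound $|\alpha_k(n)|\ll_\varepsilon n^\varepsilon$ is uniform in $D$, which is all any application uses, but it should be stated. With these two adjustments made explicit -- ranges up to absolute constants, coefficients $D$-dependent but uniformly bounded -- your derivation is complete and is the standard one behind the citation.
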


\vskip 1mm

The third one is due to Vaaler (see \cite[Theorem A.6]{GrahamKolesnik1991}).

\begin{lemma}\label{lem:Vaaler}
Let $\psi(t) := \{t\}-\frac{1}{2}$, where $\{t\}$ means the fractional part of real number $t$.
For $x\ge 1$ and $H\ge 1$, we have
$$
\psi(x) = - \sum_{1\le |h|\le H} \Phi\Big(\frac{h}{H+1}\Big) \frac{{\rm e}(hx)}{2\pi{\rm i} h} + R_H(x),
$$
where ${\rm e}(t):={\rm e}^{2\pi{\rm i}t}$, $\Phi(t):=\pi t (1-|t|)\cot(\pi t) + |t|$ and the error term $R_H(x)$ satisfies
\begin{equation}\label{eq:lem2.2}
|R_H(x)|\le \frac{1}{2H+2} \sum_{0\le |h|\le H} \Big(1-\frac{|h|}{H+1}\Big) {\rm e}(hx).
\end{equation}
\end{lemma}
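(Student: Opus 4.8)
The plan is to recover this as the standard Vaaler approximation, proved via the Beurling--Selberg extremal majorant/minorant of the periodic sawtooth. Write $\psi$ through its conditionally convergent Fourier expansion $\psi(x)=-\sum_{h\ne0}\frac{\me(hx)}{2\pi\mi h}$, put $\psi^*(x):=-\sum_{1\le|h|\le H}\Phi\big(\frac{h}{H+1}\big)\frac{\me(hx)}{2\pi\mi h}$ for the asserted main term, and let $F_H(x):=\sum_{0\le|h|\le H}\big(1-\frac{|h|}{H+1}\big)\me(hx)=\frac{1}{H+1}\big(\frac{\sin\pi(H+1)x}{\sin\pi x}\big)^2$ denote the Fej\'er kernel, a non-negative trigonometric polynomial of degree $H$ with $\int_0^1F_H=1$. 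Since $R_H=\psi-\psi^*$, the identity together with \eqref{eq:lem2.2} is equivalent to the pair of one-sided inequalities $\psi^*(x)-\frac{1}{2H+2}F_H(x)\le\psi(x)\le\psi^*(x)+\frac{1}{2H+2}F_H(x)$. Hence it suffices to produce two trigonometric polynomials $M_H:=\psi^*+\frac{1}{2H+2}F_H$ and $m_H:=\psi^*-\frac{1}{2H+2}F_H$, both of degree $H$, which majorize and minorize $\psi$, and to check that their Fourier coefficients are the ones written above; note that $\int_0^1(M_H-\psi)=\int_0^1(\psi-m_H)=\frac{1}{2H+2}$ is then forced, which is precisely the extremal $L^1$ value.

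To build $M_H$ and $m_H$ I would first work on the line. Let $B(z)$ be the Beurling entire majorant of $\mathrm{sgn}$, of exponential type $2\pi$, with $B(x)\ge\mathrm{sgn}(x)$ and $\int_{\R}(B-\mathrm{sgn})=1$, and let $-B(-z)$ be the companion minorant. Combining these (Vaaler's function) produces an entire one-sided approximation to the non-periodic sawtooth; rescaling its exponential type to $2\pi(H+1)$ and summing over the integers via the Poisson summation formula yields a $1$-periodic trigonometric polynomial of degree $\le H$. The one-sidedness survives periodization because the Beurling--Selberg error has constant sign, and Poisson summation converts the line-extremal $L^1$ error into exactly the circle kernel $\frac{1}{2H+2}F_H$, whose mass $\frac{1}{2H+2}$ matches the extremal value on $\R$; this is what produces $M_H$ and $m_H$.

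It remains to evaluate the Fourier coefficients of the periodized functions explicitly. Fourier-transforming Vaaler's line function and summing over the dual lattice gives, for $1\le|h|\le H$, the coefficient $-\frac{1}{2\pi\mi h}\Phi\big(\frac{h}{H+1}\big)$, the cotangent in $\Phi(t)=\pi t(1-|t|)\cot(\pi t)+|t|$ entering through the partial-fraction identity $\sum_{n\in\Z}\frac{1}{z+n}=\pi\cot(\pi z)$ that governs the interpolation of $\mathrm{sgn}$ at the half-integers; simultaneously the antisymmetric combination $\frac12(M_H-m_H)$ collapses to $\frac{1}{2H+2}F_H$. Matching the two computations gives $\psi^*=\frac12(M_H+m_H)$ and the claimed bound at once. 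The main obstacle is exactly this last coefficient bookkeeping: one must verify that the closed form of the extremal function transforms to precisely the multiplier $\Phi(h/(H+1))$ and that the periodized error equals the non-negative Fej\'er kernel rather than merely being dominated by it, keeping the normalizations and the factor $H+1$ consistent across the rescaling and the Poisson summation. Once the one-sided bounds hold, $|R_H(x)|\le\frac{1}{2H+2}F_H(x)$ follows immediately, and expanding $F_H$ recovers \eqref{eq:lem2.2}.
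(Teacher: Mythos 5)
The paper offers no proof of this lemma at all: it is quoted verbatim as Vaaler's theorem from \cite[Theorem A.6]{GrahamKolesnik1991}, so the ``paper's proof'' is the citation, and your outline reproduces exactly the standard Beurling--Selberg/Vaaler argument found in that source --- extremal one-sided approximations built from Beurling's majorant of $\mathrm{sgn}$, rescaling the exponential type to $2\pi(H+1)$, periodization via Poisson summation, the multiplier $\Phi(h/(H+1))$ arising from $\sum_{n\in\Z}(z+n)^{-1}=\pi\cot(\pi z)$, and the Fej\'er kernel as the exact two-sided error, with the majorant and minorant averaging to the stated trigonometric polynomial. The one step you defer as ``coefficient bookkeeping'' (checking that the periodized error is exactly $\frac{1}{2H+2}\sum_{|h|\le H}(1-\frac{|h|}{H+1})\,{\rm e}(hx)$ and not merely dominated by it, and that one-sidedness survives periodization) is indeed the only substantive content of Vaaler's construction, and it is carried out in the cited reference; your sketch is a faithful outline of essentially the same proof the paper relies on.
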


\vskip 5mm

\section{Multiple exponential sums}

Let $\alpha>0$, $\beta>0$, $\gamma>0$ and $\delta\in \R$ be some constants. 
For $X>0$, $H\ge 1$, $M\ge 1$ and $N\ge 1$, define
\begin{equation}\label{def:SdeltaHMN}
S_{\delta}
= S_{\delta}(H, M, N)
:= \sum_{h\sim H} \sum_{m\sim M} \sum_{n\sim N} a_{h, m} b_n
{\rm e}\bigg(X\frac{M^{\beta}N^{\gamma}}{H^{\alpha}} \frac{h^{\alpha}}{m^{\beta}n^{\gamma}+\delta}\bigg),
\end{equation}
where the $a_{h, m}$ and $b_n$ are complex numbers such that $|a_{h, m}|\le 1$ and $|b_n|\le 1$.
When $\delta=0$, this sum has been studied by Heath-Brown \cite{HeathBrown1983} and 
Fouvry-Iwaniec \cite{FouvryIwaniec1989}.
The aim of this section is to prove an estimate of $S_{\delta}$
by adapting and refining Heath-Brown's approach (see also \cite{LiuWu1999}).
The following proposition will play a key role in the proof of Theorem \ref{thm}.

\begin{proposition}\label{prop:HeatBrown}
Under the previous notation, for any $\varepsilon>0$ we have
\begin{align}
S_{\delta}
& \ll \big((XHMN)^{1/2} + (HM)^{1/2}N + HMN^{1/2} + X^{-1/2}HMN\big)X^{\varepsilon},
\label{BU:Sdelta_1}
\\\noalign{\vskip 1mm}
S_{\delta}
& \ll \big((X^{\kappa}H^{2+\kappa}M^{2+\kappa}N^{1+\kappa+\lambda})^{1/(2+2\kappa)}
+ HM N^{1/2}
\\
& \hskip 3,3mm
+ (HM)^{1/2} N
+ X^{-1/2}HMN\big)X^{\varepsilon} 
\label{BU:Sdelta_2}
\end{align}
uniformly for $M\ge 1$, $N\ge 1$, $H\le M^{\beta-1}N^{\gamma}$ and $|\delta|\le 1/\varepsilon$, 
where $(\kappa, \lambda)$ is an exponent pair and 
the implied constant depends on $(\alpha, \beta, \gamma, \varepsilon)$ only.
\end{proposition}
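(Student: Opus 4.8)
The plan is to exploit the multiplicative shape of the phase. Writing $Y:=X M^{\beta}N^{\gamma}H^{-\alpha}$, the argument of the exponential in \eqref{def:SdeltaHMN} is $Y\,h^{\alpha}(m^{\beta}n^{\gamma}+\delta)^{-1}$, which for $\delta=0$ factorises as $\big(Y h^{\alpha}m^{-\beta}\big)\cdot n^{-\gamma}$; that is, $S_{\delta}$ is a bilinear form in the single variable $(h,m)$, carrying the coefficients $a_{h,m}$, against $n$, carrying $b_n$. The shift $\delta$ I would treat as a bounded smooth perturbation: since $|\delta|\le 1/\varepsilon$ and, once $M^{\beta}N^{\gamma}\ge 2/\varepsilon$, one has $m^{\beta}n^{\gamma}+\delta\asymp m^{\beta}n^{\gamma}$ uniformly for $m\sim M$, $n\sim N$, every partial derivative of $(m^{\beta}n^{\gamma}+\delta)^{-1}$ has the same order of magnitude as when $\delta=0$. (The remaining range $M^{\beta}N^{\gamma}=O(1)$ forces $M,N=O(1)$, where the trivial bound $S_{\delta}\ll HMN\asymp HMN^{1/2}$ already suffices.) Hence all spacing and oscillation estimates below go through with constants depending only on $(\alpha,\beta,\gamma,\varepsilon)$, and one may reason as if the phase were genuinely bilinear. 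The two displayed bounds then come from two ways of estimating this form.

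For \eqref{BU:Sdelta_1} I would apply the Fouvry--Iwaniec double large sieve (two Cauchy--Schwarz steps with completion) to the bilinear form with frequencies $x_{h,m}:=Y h^{\alpha}m^{-\beta}$, of range $X_0\asymp XN^{\gamma}$, and $y_n:=n^{-\gamma}$, of range $Y_0\asymp N^{-\gamma}$, so that $X_0Y_0\asymp X$. This bounds $|S_{\delta}|^{2}$ by $(1+X)$ times the number of close pairs on each side. On the $n$-side the frequencies are monotone and well spaced, giving $\#\{(n,\tilde n):|y_n-y_{\tilde n}|\le X_0^{-1}\}\ll N+X^{-1}N^{2}$. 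On the $(h,m)$-side the condition $|x_{h,m}-x_{\tilde h,\tilde m}|\le Y_0^{-1}$ reduces to $|(h/\tilde h)^{\alpha}-(m/\tilde m)^{\beta}|\ll 1/X$, so Lemma \ref{lem:space}, with the roles of its two variables played by $h$ and $m$ and exponents $\alpha,\beta$, yields a count $\ll HM+X^{-1}(HM)^{2}$. Multiplying the three factors $(1+X)\big(HM+X^{-1}(HM)^{2}\big)\big(N+X^{-1}N^{2}\big)$ and extracting the square root produces exactly the four terms $(XHMN)^{1/2}+HMN^{1/2}+(HM)^{1/2}N+X^{-1/2}HMN$.

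For the sharper bound \eqref{BU:Sdelta_2} the point is to refine the treatment of the smooth variable $n$. In the double large sieve the factor $(1+X_0Y_0)\asymp X$ comes from completing \emph{both} variables, and it is the completion of the $n$-sum that is wasteful, since $n\mapsto n^{-\gamma}$ is smooth and its exponential sums enjoy cancellation measured by the exponent pair $(\kappa,\lambda)$. I would therefore retain the spacing input on the $(h,m)$-side from Lemma \ref{lem:space}, but replace the completion of the $n$-sum by a direct exponent-pair estimate: after a Cauchy--Schwarz removing the arbitrary coefficients, the surviving sums over $n$ are coefficient-free sums of the type $\sum_{n\sim N} e(c\,n^{-\gamma})$, for which $(\kappa,\lambda)$ saves a power of the frequency that the large sieve ignores. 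Classifying the pairs $(h,m),(\tilde h,\tilde m)$ by the dyadic size $\Delta$ of $|(h/\tilde h)^{\alpha}-(m/\tilde m)^{\beta}|$ through Lemma \ref{lem:space}, the near-diagonal pairs reproduce the three lower-order terms $HMN^{1/2}+(HM)^{1/2}N+X^{-1/2}HMN$, while the generic pairs, now estimated with the exponent pair instead of trivially, contribute the balanced main term $(X^{\kappa}H^{2+\kappa}M^{2+\kappa}N^{1+\kappa+\lambda})^{1/(2+2\kappa)}$ after optimising over $\Delta$. The hypothesis $H\le M^{\beta-1}N^{\gamma}$ is what keeps the frequency ranges and the derivative sizes $\partial_h\phi\asymp X/H$, $\partial_m\phi\asymp X/M$, $\partial_n\phi\asymp X/N$ in the admissible regime for both the large sieve and the exponent pair.

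The main obstacle is this second step, and two points require genuine care. First, the perturbation $\delta\ne 0$ destroys the exact product structure, so the bilinear heuristic must be executed with the true phase $Y h^{\alpha}(m^{\beta}n^{\gamma}+\delta)^{-1}$; one must verify that the difference of two such phases, as a function of the summation variable, still satisfies the monotonicity and higher-derivative bounds needed to apply both Lemma \ref{lem:space} and the exponent pair, uniformly for $|\delta|\le 1/\varepsilon$. Second, reaching the exponent $1/(2+2\kappa)$ requires interleaving the exponent-pair estimate with the spacing count at precisely the right scale $\Delta$, rather than bounding the generic off-diagonal contribution trivially (which would merely recover the convexity bound); striking this balance, and checking that the resulting exponent is never worse than the trivial one outside the stated range, is the delicate part. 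Everything else is routine dyadic bookkeeping, with the logarithmic losses from Lemma \ref{lem:space} absorbed into the factor $X^{\varepsilon}$.
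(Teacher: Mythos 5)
Your proposal diverges from the paper on both bounds, and in each case there is a genuine gap rather than just a different route. For \eqref{BU:Sdelta_1}, your double large sieve computation is correct numerology for $\delta=0$ (it is essentially Fouvry--Iwaniec's own theorem on monomials), but the double large sieve is a statement about forms $\sum\sum\phi\,\psi\,{\rm e}(xy)$ and needs the phase to factor \emph{exactly} as $x(h,m)\,y(n)$; comparable derivative bounds are irrelevant to it, since it is an algebraic, not an oscillatory, input. For $\delta\neq 0$ the phase in \eqref{def:SdeltaHMN} is genuinely non-separable: $\partial_m\partial_n\log(m^{\beta}n^{\gamma}+\delta)=\beta\gamma\delta\, m^{\beta-1}n^{\gamma-1}(m^{\beta}n^{\gamma}+\delta)^{-2}\neq 0$, and the phase defect relative to the monomial $X(h/H)^{\alpha}(M/m)^{\beta}(N/n)^{\gamma}$ is of size $\asymp X|\delta|/(M^{\beta}N^{\gamma})$. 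Under the stated hypotheses this is unbounded (only $HM\le M^{\beta}N^{\gamma}$ is assumed), and in the paper's own application in Section 4 it is $\asymp xH'/D^2$, which can be a positive power of $x$; since the defect entangles $m$ with $n$, it can be absorbed into neither $a_{h,m}$ nor $b_n$, and removing it by Taylor expansion or partial summation costs an inadmissible factor $1+X/(M^{\beta}N^{\gamma})$. The paper never uses bilinearity: it applies Cauchy--Schwarz first and works with the difference phase $f(n)$ of \eqref{function}, where the hypothesis $H\le M^{\beta-1}N^{\gamma}$ makes the $\delta$-part $\ll H^{\alpha}M^{-\beta}/(HM)$, hence dominated by the main difference \emph{off the diagonal} --- the $\delta$-term becomes harmless only after differencing, never before. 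Bound \eqref{BU:Sdelta_1} then follows from Kusmin--Landau applied to $f(n)$, with the diagonal counted by Lemma \ref{lem:space}, not from the large sieve.

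For \eqref{BU:Sdelta_2}, your plan (Cauchy--Schwarz removing $b_n$, spacing count from Lemma \ref{lem:space}, exponent pair on the coefficient-free $n$-sums, dyadic classification by $\Delta$) is structurally close to the paper's, but ``optimising over $\Delta$'' is not an available move: $\Delta$ is a classification index, every dyadic scale up to $\Delta\asymp 1$ actually occurs in the expanded quadruple sum, and the largest scale dominates the exponent-pair contribution, so this scheme yields only $|S_{\delta}|^2\ll\big(HMN^2+X^{\kappa}(HM)^2N^{1+\lambda-\kappa}+X^{-1}(HMN)^2\big)X^{\varepsilon}$, whose middle term is precisely the convexity-type bound you acknowledge is insufficient. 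The missing idea is Heath-Brown's slicing performed \emph{before} Cauchy--Schwarz: partition the pairs $(h,m)$ into $K$ classes $T_k$ according to which interval of length $\asymp H^{\alpha}M^{-\beta}K^{-1}$ contains $h^{\alpha}m^{-\beta}$, apply Cauchy--Schwarz to the double sum over $(n,k)$ at the cost of a factor $NK$, and thereby gain the constraint $\Delta\le 1/K$ on all surviving off-diagonal pairs; balancing $HMN^2K$ against $X^{\kappa}H^2M^2N^{1-\kappa+\lambda}K^{-\kappa}$ over $K\ge 1$ is what produces $(X^{\kappa}H^{2+\kappa}M^{2+\kappa}N^{1+\kappa+\lambda})^{1/(2+2\kappa)}$ (after first reducing to the case $X\ge HM$ via \eqref{BU:Sdelta_1}, a reduction you also omit), and the same device with $K\asymp X/N$, making $|f'(n)|\le\tfrac12$ so that Kusmin--Landau applies, is how the paper obtains \eqref{BU:Sdelta_1}. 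Without the factor-$K$ trade there is no mechanism capping $\Delta$, so as written your argument does not reach either stated estimate when $\delta\neq 0$.
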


\begin{proof}
Obviously we have
$$
h^{\alpha}m^{-\beta}\le C_1H^{\alpha}M^{-\beta}=: \Xi
\qquad
(h\sim H, m\sim M),
$$
where the $C_j = C_j (\alpha, \beta, \gamma)$ denotes some positive constant depending on 
$(\alpha, \beta, \gamma)$ at most.
Let $K\ge 1$ be a parameter to be chosen later.
Introducing the following set
$$
T_k := \{(h, m) :  h\sim H, \, m\sim M, \, \Xi (k-1)<h^{\alpha}m^{-\beta}K\le \Xi k\},
$$
we can write
$$
S_{\delta}
= \sum_{n\sim N} b_n \sum_{k\le K} \sum_{(h, m)\in T_k} a_{h, m} 
{\rm e}\bigg(X\frac{M^{\beta}N^{\gamma}}{H^{\alpha}} \frac{h^{\alpha}}{m^{\beta}n^{\gamma}+\delta}\bigg).
$$
By the Cauchy-Schwarz inequality, we derive 
$$
|S_{\delta}|^2
\le NK \sum_{n\sim N} \sum_{k\le K} \sum_{(h, m)\in T_k} \sum_{(h', m')\in T_k}  a_{h, m} \overline{a_{h', m'}}\,
{\rm e}\bigg(X\frac{M^{\beta}N^{\gamma}}{H^{\alpha}} 
\Big(\frac{h^{\alpha}}{m^{\beta}n^{\gamma}+\delta}-\frac{h'^{\alpha}}{m'^{\beta}n^{\gamma}+\delta}\Big)\bigg).
$$
Inverting the order of summations, it follows that
$$
|S_{\delta}|^2
\le NK \sum_{k\le K} \sum_{(h, m)\in T_k} \sum_{(h', m')\in T_k} \big|\mathcal{S}(h, h', m, m')\big|,
$$
where 
\begin{equation}\label{def:Shm}
\mathcal{S}(h, h', m, m')
:= \sum_{n\sim N} 
{\rm e}\bigg(X\frac{M^{\beta}N^{\gamma}}{H^{\alpha}} 
\Big(\frac{h^{\alpha}}{m^{\beta}n^{\gamma}+\delta}-\frac{h'^{\alpha}}{m'^{\beta}n^{\gamma}+\delta}\Big)\bigg).
\end{equation}
Noticing that
$(h, m), (h', m')\in T_k$ imply
$\big|h^{\alpha}m^{-\beta} - h'^{\alpha}m'^{-\beta}\big|\le \Xi K^{-1}$,
we can write 
\begin{equation}\label{Sdelta2=E0+E1}
|S_{\delta}|^2
\le NK 
\mathop{\sum_{h, h'\sim H} \sum_{m, m'\sim M}}_{|h^{\alpha}m^{-\beta} - h'^{\alpha}m'^{-\beta}|\le \Xi K^{-1}}  
\big|\mathcal{S}(h, h', m, m')\big|
\le NK (S_{\delta}^{\dagger} + S_{\delta}^{\sharp}),
\end{equation}
where
\begin{align*}
S_{\delta}^{\dagger}
& := \mathop{\sum_{h, h'\sim H} \sum_{m, m'\sim M}}_{|h^{\alpha}m^{-\beta} - h'^{\alpha}m'^{-\beta}|\le 
\Xi (HM)^{-1}}  
\big|\mathcal{S}(h, h', m, m')\big|,
\\
S_{\delta}^{\sharp}
& := \mathop{\sum_{h, h'\sim H} \sum_{m, m'\sim M}}_{\Xi (HM)^{-1}<
|h^{\alpha}m^{-\beta} - h'^{\alpha}m'^{-\beta}|\le \Xi K^{-1}}  
\big|\mathcal{S}(h, h', m, m')\big|.
\end{align*}
[We have made the convention that $S_{\delta}^{\sharp}=0$ if $K\le HM$.]
Since
$$
\bigg|\frac{h^{\alpha}}{m^{\beta}} - \frac{h'^{\alpha}}{m'^{\beta}}\bigg|\le \frac{\Xi}{HM}
\;\Rightarrow\;
\bigg|\frac{h^{\alpha}}{h'^{\alpha}} - \frac{m^{\beta}}{m'^{\beta}}\bigg|
\le \frac{C_2}{\Xi}\bigg|\frac{h^{\alpha}}{m^{\beta}} - \frac{h'^{\alpha}}{m'^{\beta}}\bigg|
\le \frac{C_2}{HM},
$$
Lemma \ref{lem:space} implies that the number of $(h, h', m, m')$ verifying
$|h^{\alpha}m^{-\beta} - h'^{\alpha}m'^{-\beta}|\le \Xi (HM)^{-1}$ is
$
\ll \mathcal{D}(H, M; C_2/HM)
\ll_{\varepsilon} HM X^{\varepsilon}.
$
Thus we have trivially 
\begin{equation}\label{E0}
S_{\delta}^{\dagger}
\ll_{\varepsilon} HM N X^{\varepsilon}.
\end{equation}

Next we bound $S_{\delta}^{\sharp}$.
We write
\begin{equation}\label{function}
\begin{aligned}
& \frac{h^{\alpha}}{m^{\beta}n^{\gamma}+\delta}-\frac{h'^{\alpha}}{m'^{\beta}n^{\gamma}+\delta}
\\\noalign{\vskip 0,5mm}
& = \frac{1}{n^{\gamma}}\Big(\frac{h^{\alpha}}{m^{\beta}} - \frac{h'^{\alpha}}{m'^{\beta}}\Big)
- \frac{\delta}{n^{\gamma}}\Big(\frac{h^{\alpha}}{m^{\beta}(m^{\beta}n^{\gamma}+\delta)}
- \frac{h'^{\alpha}}{m'^{\beta}(m'^{\beta}n^{\gamma}+\delta)}
\Big)
=: f(n).
\end{aligned}
\end{equation}
Since $H\le M^{\beta-1}N^{\gamma}$, we have
$$
\bigg|\frac{h^{\alpha}}{m^{\beta}(m^{\beta}n^{\gamma}+\delta)}
- \frac{h'^{\alpha}}{m'^{\beta}(m'^{\beta}n^{\gamma}+\delta)}\bigg|
\le \frac{C_3\Xi}{M^{\beta}N^{\gamma}}
\ll \frac{\Xi}{HM}
\quad
(h, h'\sim H; \; m, m'\sim M),
$$
Therefore for $(h, h', m, m')$ verifying
$\Xi (HM)^{-1}<|h^{\alpha}m^{-\beta} - h'^{\alpha}m'^{-\beta}|\le \Xi K^{-1}$,
the first member on the right-hand side of \eqref{function} dominates the second one.
Split $(\Xi (HM)^{-1}, \Xi K^{-1}]$ 
into dyadic intervals $(\Delta\Xi, 2\Delta\Xi]$ with $1/HM\le \Delta\le 1/K$.
Take $K=\max\{2C_4X/N, 1\}$ such that 
for $(h, h', m, m')$ verifying $\Xi \Delta<|h^{\alpha}m^{-\beta} - h'^{\alpha}m'^{-\beta}|\le 2\Xi \Delta$ we have
$$
\max_{n\sim N} |f'(n)|
= C_4X\Delta N^{-1}
\le \tfrac{1}{2}\cdot
$$
By Kusmin-Landau's inequality and Lemma \ref{lem:space}, we have
\begin{equation}\label{E1:1}
S_{\delta}^{\sharp}
\ll_{\varepsilon} X^{\varepsilon} \max_{4/HM\le \Delta\le 1/K}
\Delta (HM)^2 (X\Delta N^{-1})^{-1}
\ll_{\varepsilon} X^{-1+\varepsilon} (HM)^2 N.
\end{equation}
Combining \eqref{E0} and \eqref{E1:1} with \eqref{Sdelta2=E0+E1} gives us
\begin{align*}
|S_{\delta}|^2
& \ll_{\varepsilon}
\big(HM N^2 K + X^{-1}(HMN)^2 K\big) X^{\varepsilon} 
\\\noalign{\vskip 0,5mm}
& \ll_{\varepsilon} 
\big(XHM N + HM N^2
+ (HM)^2N + X^{-1}(HMN)^2\big) X^{\varepsilon} ,
\end{align*}
which is equivalent to \eqref{BU:Sdelta_1}.

Next we prove \eqref{BU:Sdelta_2}.
If $X\le HM$, then \eqref{BU:Sdelta_1} implies that
\begin{equation}\label{BU:Sdelta_2:1}
S_{\delta}
\ll_{\varepsilon} 
\big((HM)^{1/2}N + HMN^{1/2} + X^{-1/2}HMN\big)X^{\varepsilon}.
\end{equation}
Now we can suppose that $X\ge HM$.
Applying the exponent pair $(\kappa, \lambda)$ to $\mathcal{S}(h, h', m, m')$.
and using Lemma \ref{lem:space}, we can derive that
\begin{equation}\label{E1}
\begin{aligned}
S_{\delta}^{\sharp}
& \ll_{\varepsilon} X^{\varepsilon} \max_{4/HM\le \Delta\le 1/K}
\Delta (HM)^2
\big((X\Delta N^{-1})^{\kappa} N^{\lambda} + (X\Delta N^{-1})^{-1}\big)
\\\noalign{\vskip 0,5mm}
& \ll_{\varepsilon} X^{\varepsilon} 
\big(X^{\kappa}H^2M^2N^{-\kappa+\lambda} K^{-1-\kappa} 
+ X^{-1}H^2M^2N\big).
\end{aligned}
\end{equation}
Combining \eqref{E0} and \eqref{E1} with \eqref{Sdelta2=E0+E1} gives us
$$
|S_{\delta}|^2
\ll_{\varepsilon}
\big(X^{\kappa}H^2M^2N^{1-\kappa+\lambda} K^{-\kappa}
+ HM N^2 K\big) X^{\varepsilon} 
$$
for all $K\in [1, HM]$,
where we have removed the term $X^{-1}H^2M^2N^2 K \; (\le HM N^2 K$ 
since we have suppose that $X\ge HM$).
Noticing that this estimate is trivial if $K\ge HM$,
we can optimise the parameter $K$ over $[1, \infty)$ to get
\begin{align*}
|S_{\delta}|^2
& \ll_{\varepsilon}
\big(
(X^{\kappa}H^{2+\kappa}M^{2+\kappa}N^{1+\kappa+\lambda})^{1/(1+\kappa)}
+ HM N^2\big) X^{\varepsilon}.
\end{align*}
Combining this with \eqref{BU:Sdelta_2:1}, we obtain \eqref{BU:Sdelta_2}.
\end{proof}

\vskip 5mm

\section{A key inequality}

The aim of this section is to prove the following proposition, which will play a key role for the proof of Theorem \ref{thm}.
Define
\begin{equation}\label{def:S-Lambda}
\mathfrak{S}_{\delta}(x, D)
:= \sum_{d\sim D} \Lambda(d) \psi\Big(\frac{x}{d+\delta}\Big).
\end{equation}

\begin{proposition}\label{prop}
Let $\delta\notin -\N$ be a fixed constant. Then we have
\begin{equation}\label{eq:prop_1}
\begin{aligned}
\mathfrak{S}_{\delta}(x, D)
& \ll \big((x^{2\kappa} D^{3+\lambda})^{1/(4\kappa+4)}
+ D^{5/6}
\\
& \hskip 3,4mm
+ (x^{3\kappa'} D^{-2\kappa'+2\lambda'+1})^{1/(3\kappa'+3)}
+ (x^{3\kappa'} D^{-5\kappa'+2\lambda'+1})^{1/3} 
\big) x^{\varepsilon}.
\end{aligned}
\end{equation}
uniformly for $x\ge 3$ and $1\le D\le x^{2/3}$,
where $(\kappa, \lambda)$ and $(\kappa', \lambda')$ are exponent pairs.
In particular, uniformly for $x^{6/13}\le D\le x^{2/3}$ we have
\begin{equation}\label{eq:prop_2}
\mathfrak{S}_{\delta}(x, D)
\ll_{\varepsilon} (x^2 D^7)^{1/12} x^{\varepsilon}.
\end{equation}
\end{proposition}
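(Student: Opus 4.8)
The plan is to linearize the sawtooth $\psi$ by Vaaler's approximation (Lemma \ref{lem:Vaaler}), remove the von Mangoldt weight by Vaughan's identity (Lemma \ref{lem:VaughanIdentity}), and then estimate the resulting one- and three-dimensional exponential sums. Concretely, I would apply Lemma \ref{lem:Vaaler} with a parameter $H\ge 1$ to be chosen later; the zero-frequency part of the error $R_H$ contributes $\ll D^{1+\varepsilon}/H$, while its nonzero frequencies are exponential sums of the very same shape as the main term but with an extra saving $H^{-1}$, so they are dominated. This reduces matters to bounding
$$
\sum_{1\le|h|\le H}\frac{\Phi(h/(H+1))}{2\pi\mathrm{i}h}\sum_{d\sim D}\Lambda(d)\,\mathrm{e}\Big(\frac{hx}{d+\delta}\Big).
$$
A dyadic splitting in $h$ (into ranges $h\sim H_0$) followed by Vaughan's identity applied to the inner $d$-sum with $g(d)=\mathrm{e}(hx/(d+\delta))$ decomposes this into type~I sums $S_1,S_2$ and type~II sums $S_3,S_4$; the logarithm in $S_2$ is removed by partial summation, so it suffices to treat a generic type~I and a generic type~II piece.

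For the type~II sums I would invoke Proposition \ref{prop:HeatBrown}. Writing $d=mn$ with $D^{1/3}<m,n\le D^{2/3}$ and splitting dyadically so that $m\sim M$, $n\sim N$, $MN\asymp D$, the phase is exactly of the form \eqref{def:SdeltaHMN} with $\alpha=\beta=\gamma=1$ and $X:=xH_0/D$, the coefficients $a_{h,m}:=\Phi(h/(H+1))\alpha(m)/(2\pi\mathrm{i}h)$ and $b_n:=\beta(n)$ being bounded by $D^\varepsilon/H_0$ and $D^\varepsilon$ respectively. Since both variables carry genuine coefficients I may use the symmetry $m\leftrightarrow n$ to assume $N\ge D^{1/2}$, which secures the hypothesis $H_0\le N$ of Proposition \ref{prop:HeatBrown} as soon as $H\le D^{1/2}$. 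Feeding these parameters into \eqref{BU:Sdelta_2} with the exponent pair $(\kappa,\lambda)$ and dividing by $H_0$, one finds that the leading term becomes independent of $H_0$ and equals $x^{\kappa/(2+2\kappa)}D^{2/(2+2\kappa)}N^{(\lambda-1)/(2+2\kappa)}$; as this is decreasing in $N$, its maximum over $N\ge D^{1/2}$ occurs at $N=D^{1/2}$ and produces precisely $(x^{2\kappa}D^{3+\lambda})^{1/(4\kappa+4)}$, the first term of \eqref{eq:prop_1}. The three remaining terms of \eqref{BU:Sdelta_2} are all $\ll D^{5/6}$ throughout $1\le D\le x^{2/3}$, hence are absorbed into the second term of \eqref{eq:prop_1}.

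For the type~I sums the coefficient $b_n$ is trivial, so instead of losing a factor through Cauchy--Schwarz I would apply an exponent pair $(\kappa',\lambda')$ directly to the smooth inner sum over the long variable $n$ (here $m\le D^{1/3}$ and $N\asymp D/m\ge D^{2/3}$). The phase $\phi(n)=hx/(mn+\delta)$ satisfies $\phi^{(j)}(n)\asymp FN^{-j}$ with $F:=hx/D\ge x^{1/3}>1$, so $\sum_{n\sim N}\mathrm{e}(\phi(n))\ll F^{\kappa'}N^{\lambda'-\kappa'}$; summing trivially over $m\le D^{1/3}$ and over $h\sim H_0$ against the weight $1/h$ and optimizing the dyadic parameters gives a contribution $\ll H^{\kappa'}x^{\kappa'}D^{(1+2\lambda'-5\kappa')/3}x^\varepsilon$. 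This increases with $H$, whereas the Vaaler error $D^{1+\varepsilon}/H$ decreases; balancing the two yields the third term $(x^{3\kappa'}D^{-2\kappa'+2\lambda'+1})^{1/(3\kappa'+3)}$ of \eqref{eq:prop_1} when the optimal $H$ exceeds $1$, while the boundary choice $H=1$ yields the fourth term $(x^{3\kappa'}D^{-5\kappa'+2\lambda'+1})^{1/3}$; taking the larger of the two covers both regimes. Collecting the type~I and type~II bounds together with the error terms gives \eqref{eq:prop_1}.

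Finally, \eqref{eq:prop_2} follows by specializing both exponent pairs to $(\kappa,\lambda)=(\kappa',\lambda')=(\tfrac12,\tfrac12)$, the trivial van der Corput pair: the first term then becomes exactly $(x^2D^7)^{1/12}$, the comparison $D^{5/6}\le(x^2D^7)^{1/12}$ holds precisely when $D\le x^{2/3}$, and the third term $x^{1/3}D^{2/9}$ is $\le(x^2D^7)^{1/12}$ precisely when $D\ge x^{6/13}$ (with the fourth term smaller still), which is exactly the stated range. I expect the main technical nuisance to lie not in any single estimate but in the bookkeeping of the optimization: one must check that a single Vaaler parameter $H$ simultaneously respects the constraint $H\le D^{1/2}$ demanded by Proposition \ref{prop:HeatBrown} in the type~II range and realizes the type~I balance, and that every secondary term genuinely collapses into $D^{5/6}$ over the whole range $1\le D\le x^{2/3}$.
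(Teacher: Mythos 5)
Your proposal is correct and follows essentially the same route as the paper: Vaughan's identity combined with Vaaler's approximation, Proposition \ref{prop:HeatBrown} applied with $\alpha=\beta=\gamma=1$, $X\asymp xH_0/D$ and the symmetry reduction $N\ge D^{1/2}$ (forcing $H\le D^{1/2}$) for the type~II sums, and a direct exponent-pair estimate for the type~I sums, which the paper simply imports as \cite[(5.16)]{MaWu2020} rather than re-deriving as you do. The only cosmetic differences are the order in which Vaaler and Vaughan are applied and your self-contained treatment of type~I; all of your exponent bookkeeping --- the leading term $(x^{2\kappa}D^{3+\lambda})^{1/(4\kappa+4)}$ at $N=D^{1/2}$, the absorption of the secondary terms into $D^{5/6}$ for $D\le x^{2/3}$, the balance $D/H$ versus $H^{\kappa'}x^{\kappa'}D^{(-5\kappa'+2\lambda'+1)/3}$ producing the third and fourth terms, and the thresholds $D\le x^{2/3}$, $D\ge x^{6/13}$ for \eqref{eq:prop_2} --- agrees with the paper's proof.
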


\begin{proof}
We apply the Vaughan identity \eqref{identity:Vaughan} with $g(d) = \psi\big(\frac{x}{d+\delta}\big)$ to write
\begin{equation}\label{proof:prop_1}
\mathfrak{S}_{\delta}(x, D)
= \mathfrak{S}_{\delta, 1} + \mathfrak{S}_{\delta, 2} + \mathfrak{S}_{\delta, 3} + \mathfrak{S}_{\delta, 4},
\end{equation}
where
\begin{align*}
\mathfrak{S}_{\delta, 1}
& := \sum_{m\le D^{1/3}} \alpha_1(m) \sum_{D<mn\le 2D} \psi\Big(\frac{x}{mn+\delta}\Big),
\\\noalign{\vskip 1mm}
\mathfrak{S}_{\delta, 2}
& := \sum_{m\le D^{1/3}} \alpha_2(m) \sum_{D<mn\le 2D} \psi\Big(\frac{x}{mn+\delta}\Big)\log n,
\\\noalign{\vskip 1mm}
\mathfrak{S}_{\delta, 3}
& := \mathop{{\sum}\,\,{\sum}}_{\substack{D^{1/3}<m, n\le D^{2/3}\\ D<mn\le 2D}}
\alpha_3(m) \alpha_4(n) \psi\Big(\frac{x}{mn+\delta}\Big),
\\\noalign{\vskip -1mm}
\mathfrak{S}_{\delta, 4}
& := \mathop{{\sum}\,\,{\sum}}_{\substack{D^{1/3}<m, n\le D^{2/3}\\ D<mn\le 2D}}
\alpha_5(m) \alpha_6(n) \psi\Big(\frac{x}{mn+\delta}\Big).
\end{align*}

Firstly we estimate $\mathfrak{S}_{\delta, 3}$.
In view of Lemma \ref{lem:Vaaler}, we can write
\begin{equation}\label{proof:prop_2}
\mathfrak{S}_{\delta, 3}
= - \frac{1}{2\pi\text{i}} \sum_{H'} \sum_{M} \sum_{N} 
\big(\mathfrak{S}^{\flat}_{\delta, 3}(H', M, N) + \overline{\mathfrak{S}^{\flat}_{\delta, 3}}(H', M, N)\big)
+ \sum_{M} \sum_{N} \mathfrak{S}^{\dagger}_{\delta, 3}(M, N).
\end{equation}
where $MN\asymp D$ (i.e. $D\ll MN\ll D$), 
$\alpha(h):=\frac{H'}{h}\Phi\big(\frac{h}{H+1}\big)\ll 1$ and
\begin{align*}
\mathfrak{S}^{\flat}_{\delta, 3}(H', M, N)
& := \frac{1}{H'} \sum_{h\sim H'} \mathop{\sum_{m\sim M} \sum_{n\sim N}}_{D<mn\le 2D} 
\alpha(h)\alpha_3(m) \alpha_4(n) \text{e}\Big(\frac{hx}{mn+\delta} \Big),
\\
\mathfrak{S}^{\dagger}_{\delta, 3}(M, N)
& := \mathop{\sum_{m\sim M} \sum_{n\sim N}}_{D<mn\le 2D} \alpha_3(m) \alpha_4(n) R_H\Big(\frac{x}{mn+\delta}\Big).
\end{align*}

Firstly we bound $\mathfrak{S}^{\flat}_{\delta, 3}(H', M, N)$.
We remove the extra multiplicative condition $D<mn\le 2D$ at the cost of a factor $\log x$.
On the other hand, 
in view of the symmetry of the variables $m$ and $n$, we can suppose that 
\begin{equation}\label{Cond:MND}
D^{1/3}\le M\le D^{1/2}\le N\le D^{2/3}.
\end{equation}
By Proposition \ref{prop:HeatBrown} with $\alpha=\beta=\gamma=1$ and $(X, H, M, N)=(xH'/MN, H', M, N)$, 
we have
\begin{equation}\label{S3:1}
\begin{aligned}
\mathfrak{S}^{\flat}_{\delta, 3}(H', M, N)
& \ll_{\varepsilon}
\big((x^{\kappa} M^2N^{1+\lambda})^{1/(2+2\kappa)}
+ M N^{1/2}
+ M^{1/2} N 
+ (x^{-1}DH')^{1/2}\big) x^{\varepsilon},
\end{aligned}
\end{equation}
provided $H'\le H\le N$.

Secondly we bound $\mathfrak{S}^{\dagger}_{\delta, 3}(M, N)$.
Using \eqref{eq:lem2.2}, we have
\begin{align*}
\mathfrak{S}^{\dagger}_{\delta, 3}(M, N)
& \ll x^{\varepsilon} \sum_{m\sim M} \sum_{n\sim N}
\Big|R_H\Big(\frac{x}{mn+\delta}\Big)\Big|
\\
& \ll \frac{x^{\varepsilon}}{H} \sum_{m\sim M} \sum_{n\sim N}
\sum_{0\le |h|\le H} \Big(1-\frac{|h|}{H+1}\Big) {\rm e}\Big(\frac{hx}{mn+\delta}\Big)
\\
& \ll x^{\varepsilon}\big(DH^{-1} + \max_{1\le H'\le H} \big|\widetilde{\mathfrak{S}}^{\dagger}_{\delta, 3}(H', M, N)\big|\big),
\end{align*}
where
$$
\widetilde{\mathfrak{S}}^{\dagger}_{\delta, 3}(H', M, N)
:= \frac{1}{H} \sum_{m\sim M} \sum_{n\sim N}
\sum_{h\sim H'} \Big(1-\frac{|h|}{H+1}\Big) {\rm e}\Big(\frac{hx}{mn+\delta}\Big).
$$
Clearly we can bound $\widetilde{\mathfrak{S}}^{\dagger}_{\delta, 3}(H', M, N)$
in the same way as $\mathfrak{S}^{\flat}_{\delta, 3}(H', M, N)$
and obtain
\begin{equation}\label{S3:2}
\begin{aligned}
\mathfrak{S}^{\dagger}_{\delta, 3}(M, N)
& \ll_{\varepsilon}
\big(DH^{-1} 
+ (x^{\kappa} M^2N^{1+\lambda})^{1/(2+2\kappa)}
\\
& \hskip 4,5mm
+ M N^{1/2}
+ M^{1/2} N 
+ (x^{-1}DH)^{1/2}\big) x^{\varepsilon},
\end{aligned}
\end{equation}
provided $H\le N$.
Combining \eqref{S3:1} and \eqref{S3:2} with \eqref{proof:prop_2} and using \eqref{Cond:MND}, we find that
\begin{align*}
\mathfrak{S}_{\delta, 3}
& \ll_{\varepsilon}
\big(DH^{-1} 
+ (x^{\kappa} M^2N^{1+\lambda})^{1/(2+2\kappa)}
+ M N^{1/2}
+ M^{1/2} N
+ (x^{-1}DH)^{1/2}\big) x^{\varepsilon}
\\
& \ll_{\varepsilon}
\big(DH^{-1} 
+ (x^{2\kappa} D^{3+\lambda})^{1/(4+4\kappa)}
+ D^{5/6}
+ (x^{-1}DH)^{1/2}\big) x^{\varepsilon},
\end{align*}
provided $H\le D^{1/2} \, (\le N)$.
Optimising $H$ over $[1, D^{1/2}]$, it follows that
\begin{equation}\label{proof:prop_10}
\mathfrak{S}_{\delta, 3}
\ll_{\varepsilon}
\big((x^{2\kappa} D^{3+\lambda})^{1/(4+4\kappa)}
+ D^{5/6}
+ (x^{-1}D^2)^{1/3}\big) x^{\varepsilon}.
\end{equation}
Clearly the same estimate also holds for $\mathfrak{S}_{\delta, 4}$.

On the other hand, let $(\kappa', \lambda')$ be an exponent pair,
then \cite[(5.16)]{MaWu2020} gives us
\begin{equation}\label{proof:prop_14}
\mathfrak{S}_{\delta, j}
\ll \big((x^{3\kappa'} D^{-2\kappa'+2\lambda'+1})^{1/(3\kappa'+3)} + x^{\kappa'} D^{(-5\kappa'+2\lambda'+1)/3} 
+ x^{-1} D^2\big) x^{\varepsilon}.
\end{equation}
for $j=1, 2$.
Inserting \eqref{proof:prop_14} and \eqref{proof:prop_10} into \eqref{proof:prop_1}
and using the fact that 
$$
\max\{(x^{-1}D^2)^{1/3}, x^{-1} D^2\}\le D^{5/6}
$$ 
for $1\le D\le x^{2/3}$,
we get \eqref{eq:prop_1}.

Taking $(\kappa, \lambda)=(\kappa', \lambda')=(\frac{1}{2}, \frac{1}{2})$ in \eqref{eq:prop_1},
we find that 
$$
\mathfrak{S}_{\delta}(x, D)
\ll \big((x^2 D^7)^{1/12}
+ D^{5/6}+ (x^3 D^2)^{1/9} + (x^3 D^{-1})^{1/6}
\big) x^{\varepsilon},
$$
which implies \eqref{eq:prop_2}, 
since the last three terms can be absorbed by the first one provided $x^{6/13}\le D\le x^{2/3}$.
\end{proof}

\vskip 5mm

\section{Proof of Theorem \ref{thm}}

Let $N\in [x^{6/13}, x^{1/2})$ be a parameter which can be chosen later.
First we write
\begin{equation}\label{3.1}
\sum_{n\le x} \Lambda\Big(\Big[\frac{x}{n}\Big]\Big) := S_1(x)+S_2(x)
\end{equation}
with
$$
S_1(x):=\sum_{n\le N} \Lambda\Big(\Big[\frac{x}{n}\Big]\Big),      
\qquad 
S_2(x):=\sum_{N<n\le x} \Lambda\Big(\Big[\frac{x}{n}\Big]\Big).
$$
We have trivially
\begin{equation}\label{3.2}
S_1(x)
\ll_{\varepsilon} N x^{\varepsilon}.
\end{equation}
Next we bound $S_2(x)$. Putting $d=[x/n]$, then
$x/n-1<d\le x/n
\Leftrightarrow
x/(d+1)<n\le x/d$.
Thus we can write
\begin{equation}\label{2.4}
\begin{aligned}
S_2(x)
& =\sum_{d\le x/N} \Lambda(d) \sum_{x/(d+1)<n\le x/d} 1
\\
& =\sum_{d\le x/N} \Lambda(d) \Big(\frac{x}{d}-\psi\Big(\frac{x}{d}\Big)-\frac{x}{d+1}+\psi\Big(\frac{x}{d+1} \Big) \Big)
\\
& = x\sum_{d\ge 1} \frac{\Lambda(d)}{d(d+1)} +    \mathcal{R}_{1}(x) -  \mathcal{R}_{0}(x) + O(N),
\end{aligned}
\end{equation}
where we have used the following bounds
$$
x \sum_{d>x/N} \frac{\Lambda(d)}{d(d+1)}\ll_{\varepsilon} N x^{\varepsilon},
\qquad
\sum_{d\le N} \Lambda(d)\Big(\psi\Big(\frac{x}{d+1}\Big) - \psi\Big(\frac{x}{d}\Big)\Big)
\ll_{\varepsilon} N x^{\varepsilon}
$$
and defined
$$
\mathcal{R}_{\delta}(x)
= \sum_{N< d\le x/N} \Lambda(d) \psi\Big(\frac{x}{d+\delta}\Big).
$$
Writing $D_j:=x/(2^jN)$, we have $x^{6/13}\le N\le D_j\le x/N\le x^{7/13}$ for $0\le j\le \frac{\log(x/N^2)}{\log 2}$
since $x^{6/13}\le N\le x^{1/2}$.
Thus we can apply \eqref{eq:prop_2} of Proposition \ref{prop} to get
\begin{align*}
|\mathcal{R}_{\delta}(x)|
& \le \sum_{0\le j\le \log(x/N^2)/\log 2} |\mathfrak{S}_{\delta}(x, D_j)|
\\
& \ll \sum_{0\le j\le \log(x/N^2)/\log 2} (x^2D_j^7)^{1/12} x^{\varepsilon}
\\
& \ll (x^9N^{-7})^{1/12} x^{\varepsilon}.
\end{align*}
Putting this into \eqref{2.4} and taking $N=x^{9/19}$, we find that
\begin{equation}\label{S2}
S_2(x) = x\sum_{d\ge 1} \frac{\Lambda(d)}{d(d+1)} + O_{\varepsilon}\big(x^{9/19+\varepsilon}\big).
\end{equation}
Inserting \eqref{3.2} with $N=x^{9/19}$ and \eqref{S2} into \eqref{3.1}, we get the required result.

\vskip 3mm

\noindent{\bf Acknowledgement}.
This work is supported in part by the National Natural Science Foundation of China 
(Grant Nos. 12071238, 11771252, 11971370 and 12071375)
and by the NSF of Chongqing (Grant  No. cstc2019jcyj-msxm1651).

\vskip 8mm

\end{document}